\documentclass{article}
\usepackage{amsmath,amssymb, epsf}
\usepackage{theorem}
 
\newtheorem{thm}{Theorem}

\newtheorem{lemma}{Lemma}
{\theorembodyfont{\rmfamily} \newtheorem{example}{Example}}
\newenvironment{proof}{\noindent\textsc{Proof}}{\hfill\ensuremath{\blacksquare}}

\newenvironment{ack}{\noindent\textbf{Acknowledgements}}{}

\newcommand{\cK}{{\cal K}}
\newcommand{\cJ}{{\cal J}}
\newcommand{\cE}{{\cal E}}
\newcommand{\cW}{{\cal W}}
\parskip 0.1in
\parindent 0pt

\begin{document}

\title{A Proof of Solomon's Rule} 
\author{Stephanie J. van Willigenburg\\Department of Mathematics and 
Statistics,\\York University, 4700 Keele St, \\North
York, ON, M3J 1P3, CANADA.} 
\maketitle 
\begin{abstract}We put forward a proof   of Solomon's rule, in terms of matrices, for 
multiplication
in the descent algebra of the symmetric group. Our proof exploits the
graphs that we  can  obtain from all the subsets of the set of transpositions,
$\{(i,i+1)\}_{i=1}^{n-1}$. 
\end{abstract}

Let $W$ be a Coxeter group with generating set, $S$, of fundamental reflections. If $J$ is any 
subset of $S$, let
$W_J$ be the subgroup generated by $J$. Let $X_J$ be the unique set of minimal length left coset
representatives of $W_J$. Note that $X_J^{-1}=\{x^{-1}|x\in X_J\}$    is the unique set of minimal
length representatives for the right cosets of
 $W_J$.  Let $l(y)$ denote the length of $y$ in $W$.

Solomon \cite{solomon-mackey} then gives us the following theorem:
\begin{thm}\label{solomon-theorem}
For every subset $K$ of $S$, let
\[{\cal X}_K=\sum_{\sigma\in X_K}\sigma.\]
Then for subsets $J$ and $K$ in $S$
\[{\cal X}_J{\cal X}_K=\sum  _{x\in X_J ^{-1}\cap X_K}{\cal X}_{x^{-1}Jx\cap K}\]
\end{thm}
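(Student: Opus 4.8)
The plan is to read off the coefficient of each group element $w\in W$ on both sides of the asserted identity and show that they agree. Writing ${\cal X}_J=\sum_{a\in X_J}a$ and ${\cal X}_K=\sum_{b\in X_K}b$, the coefficient of $w$ in the product ${\cal X}_J{\cal X}_K$ is the number of factorizations $w=ab$ with $a\in X_J$ and $b\in X_K$, i.e.\ $\#\{a\in X_J:a^{-1}w\in X_K\}$. On the other side, since $X_L=\{v\in W:l(vs)>l(v)\ \text{for all}\ s\in L\}$, the coefficient of $w$ in ${\cal X}_L$ is $1$ when the right descent set $R(w)=\{s\in S:l(ws)<l(w)\}$ avoids $L$ and $0$ otherwise, so the coefficient of $w$ in $\sum_{x\in X_J^{-1}\cap X_K}{\cal X}_{x^{-1}Jx\cap K}$ is $\#\{x\in X_J^{-1}\cap X_K:R(w)\cap(x^{-1}Jx\cap K)=\emptyset\}$. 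Thus the theorem is equivalent to the statement that these two counts coincide for every $w$.

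To prove that, I would first collect the standard facts about minimal coset representatives: $a\in X_J$ iff $R(a)\cap J=\emptyset$; $x\in X_J^{-1}$ iff the left descent set of $x$ misses $J$; $l(ux)=l(u)+l(x)$ whenever $u\in W_J$ and $x\in X_J^{-1}$; each right coset $W_Jb$ has a unique shortest element $d=d(b)\in X_J^{-1}$, so $b=ud$ with $u\in W_J$ and $l(b)=l(u)+l(d)$; and, for $d$ also lying in $X_K$, the intersection $d^{-1}W_Jd\cap W_K$ is the standard parabolic $W_L$ with $L=d^{-1}Jd\cap K\subseteq S$ and $dLd^{-1}\subseteq S$ as well. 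The heart of the argument is then to construct the bijection
\[
\{(a,b):a\in X_J,\ b\in X_K\}\ \longrightarrow\ \{(w,d):d\in X_J^{-1}\cap X_K,\ R(w)\cap(d^{-1}Jd\cap K)=\emptyset\},\qquad (a,b)\longmapsto\bigl(ab,\ d(b)\bigr),
\]
where $d(b)$ is the shortest element of the right coset $W_Jb$ (playing the role of the representative $x$ in the statement). One first checks $d(b)\in X_J^{-1}\cap X_K$: it is in $X_J^{-1}$ by construction, and if $l(d(b)s)<l(d(b))$ for some $s\in K$ then, writing $b=u\,d(b)$ with $l(b)=l(u)+l(d(b))$, one gets $l(bs)<l(b)$, contradicting $b\in X_K$. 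Next one verifies $R(ab)\cap(d(b)^{-1}Jd(b)\cap K)=\emptyset$, so the map is well defined, and finally one builds the inverse: from a pair $(w,d)$ the element $b$ is the unique element of $W_Jd\cap X_K$ for which $wb^{-1}\in X_J$, and $a:=wb^{-1}$; here the descent hypothesis $R(w)\cap(d^{-1}Jd\cap K)=\emptyset$ is exactly what forces the representative inside $W_Jd$ and pins down $a$, giving injectivity and surjectivity at once. Since this bijection preserves the first coordinate $ab=w$, it equates the two coefficient counts above, hence the two sides of the identity.

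The main obstacle is this well-definedness and inversion, because the product $ab$ is not length-additive (neither $l(ab)=l(a)+l(b)$ nor any grading by the double coset of $ab$ holds), so the descent bookkeeping — showing $R(ab)$ avoids $d^{-1}Jd\cap K$, and dually that the descent constraint on $w$ selects a unique $b$ — has to be carried out by hand. For $W=S_n$ this is where the graphs enter and the statement becomes concrete: a subset $J$ of $\{(i,i+1)\}_{i=1}^{n-1}$ is a disjoint union of paths, $W_J$ is the Young subgroup on the corresponding interval blocks, $X_J$ is the set of permutations increasing on each block, $X_J^{-1}\cap X_K$ is indexed by the nonnegative integer matrices whose row sums are the block sizes of $J$ and whose column sums are the block sizes of $K$, and $d^{-1}Jd\cap K$ is the composition obtained by listing the nonzero entries of that matrix. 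Rewriting the bijection above in this language turns the whole computation into a transparent statement about interleavings of blocks, and Solomon's rule emerges in precisely the matrix form promised in the abstract.
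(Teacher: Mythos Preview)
The paper does not prove Theorem~1: it is quoted from Solomon's original paper and taken as input. The paper's own work is Lemma~1 (identifying the ordered presentation of $x^{-1}\cJ\cap\cK$) and the passage from Theorem~1 to the matrix reformulation, Theorem~2. So there is no proof here to compare your attempt against.

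Your outline is nonetheless a legitimate route to Solomon's formula---the bijective form of the Mackey argument---and the map $(a,b)\mapsto(ab,d(b))$ is indeed a bijection onto the set you describe. But, as you yourself note, the well-definedness (that $R(ab)$ avoids $d^{-1}Jd\cap K$) and the inversion are the whole content, and you do not carry them out: showing $l(abs)>l(ab)$ for $s\in d^{-1}Jd\cap K$ needs the exchange condition in a nontrivial way, and your description of the inverse (``the unique $b\in W_Jd\cap X_K$ with $wb^{-1}\in X_J$'') presupposes both an existence and a uniqueness statement, neither of which is supplied. So the proposal is a correct plan with its central lemma left open.

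Your final paragraph also blurs two different tasks. In this paper the graphs are not used to prove Theorem~1; they are used, \emph{assuming} Theorem~1, to compute $x^{-1}Jx\cap K$ explicitly via Lemma~1 and thereby convert the sum over $X_J^{-1}\cap X_K$ into a sum over nonnegative integer matrices with prescribed row and column sums. The ``interleavings of blocks'' you allude to are the content of Theorem~2, deduced from Theorem~1, not a proof of Theorem~1 itself.
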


From this it follows that the set of all ${\cal X}_K$ form a basis for an algebra, the
\emph{descent algebra} of $W$.

Independently, interpretations of this theorem involving
certain matrices  have been developed for the descent
algebras of the Coxeter groups of types $A$ and $B$ as
a means of obtaining further results about these
algebras,
\cite{garsia-reutenauer},\cite{garsia-remmel},\cite{atkinson-solomon}, \cite{bergeron-bergeron},
\cite{bergeron-n}. However, no such matrix interpretation was known for the Coxeter groups of type
$D$. In this paper we shall develop some tools and a lemma from which we can easily deduce the
matrix interpretation of Theorem ~\ref{solomon-theorem} for the descent algebra of the Coxeter
groups of type $A$. The purpose of this paper, however, is not just to give another proof of a
well known result, instead it is to act as a precursor to a subsequent paper in which we
formulate the missing matrix interpretation for the descent algebras of the Coxeter groups of
type $D$, \cite{bergeron-vanwilli}.

To develop our tools, let us take our Coxeter group
$W$ to be the   Coxeter group of type
$A$ with $n-1$ fundamental reflections, that is the symmetric group $S_n$. More specifically,  let
us take $S_n$ to be the group of permutations acting on  the set 
$N=\{1,\ldots ,n\}$, with generating set
$S$,  where $S$ is the set of
$n-1$ transpositions  $s_1,s_2,\ldots ,s_{n-1}$, such that $s_i=(i,i+1)$\index{generating
set!symmetric groups}.
 
If $J$ is a subset of $S$, then we define the \emph{graph} \index{graph!symmetric groups}$\cJ =(N
,\cE )$ of $J$ to be the graph with vertex set $\{ 1,\ldots ,n\}$, and edge set $\cE =\{ (i,i+1)\in
J\}$. In general we shall use roman capitals $J,K,\ldots$ for subsets of $S$, and their
calligraphic counterparts $\cJ ,\cK ,\ldots$ for their associated graphs. Suppose now that $\cJ$
has $r$ connected components. A   set of vertices is associated with each component, and we can
order these sets by their least elements in a natural way. Once ordered, we can label them   ${\cal
J}_1,\ldots ,{\cal J}_r$ such that  $1\in {\cal J}_1$, and define the\emph{ ordered presentation}
\index{ordered presentation!symmetric groups|emph}of $\cJ$ to be the ordered list
\[(\cJ _1,\ldots ,\cJ _r).\]
Note that this is the canonical ordered set partition associated to $J$, and
 that if $u\in\cJ _i$ and $v\in \cJ _j$, and $i<j$, then $u<v$.

\begin{example}
In $S_9$. If $J=\{(2,3),(3,4),(7,8)\}$, then $\cJ$ is
\begin{figure}[htbp]
\centerline{\epsffile{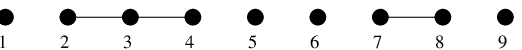}}
\end{figure}
 
with

\[\cJ_1=\{1\}, \cJ_2=\{2,3,4\}, \cJ_3=\{5\}, \cJ_4=\{6\}, \cJ_5=\{7,8\}, \cJ_6=\{9\} .\]
The ordered presentation of $\cJ$ is  \[(\{1\},\{2,3,4\},\{5\},\{6\},\{7,8\},\{9\}).\]
\end{example}

Let us define  $\cW _{\cJ _i}$ to be the subgroup of $S_n$ that consists of all permutations of
$S_n$ that   fix all points outside of
$\cJ _i$, and let
\[\cW _{\cJ}=\cW _{\cJ _1}\times\ldots\times\cW _{\cJ _r}.\]
Observe that $\cW _\cJ =W_J$. If we let $\kappa$ be a composition of $n$, with components $\kappa
_1, \kappa _2, \ldots ,\kappa _r$, and define
\[\boldsymbol{S_\kappa}=S_{\kappa _1}\times\ldots\times S_{\kappa _r},\]
then 
$\boldsymbol{S_\kappa}\cong\cW _\cJ$, where the    sets
$\cJ _i$ of $\cJ$ satisfy
\begin{eqnarray}&|\cJ _i|=\kappa _i.&\label{scor-respond}
\index{composition!algorithm for!symmetric
groups}\end{eqnarray}

Let us now take $J$ and $K$ to be any subsets of $S$,  and $x\in W$, and 
let $x\cJ$ denote the image of the graph $\cJ$ under $x$; that is $(x(i),x(j))$ is an edge in $x\cJ
$ if and only if $(i,j)$ is an edge in $\cJ$. Let $\cJ\cap\cK$ be the graph with vertex set $N$
whose edges are those present in both $\cJ$ and $\cK$. The ordered presentation of
$x^{-1}\cJ\cap\cK$, where $x\in X_J^{-1}\cap X_K$, is given by the following lemma.

\begin{lemma}\index{ordered presentation!symmetric groups}
Let $J$ and $K$ be subsets of $S$, and let  $x\in X_J^{-1}
\cap X_K$. Let the ordered  presentation of $\cJ$ be $(\cJ
_1,\ldots ,\cJ _r)$, and $\cK$ be $(\cK _1,\ldots ,\cK
_s)$. Then the ordered
 presentation of  
$x^{-1}\cJ\cap\cK$ is 
 \begin{eqnarray}\nonumber&(x^{-1}\cJ_1\cap \cK_1,
x^{-1}\cJ_2\cap \cK_1, \ldots ,x^{-1}\cJ _r\cap\cK
_1,&\\\nonumber 
& x^{-1}\cJ _1\cap\cK _2, x^{-1}\cJ _2\cap\cK
_2,\ldots ,x^{-1}\cJ _r\cap\cK
_2,&\\\label{j-andk}
 & \ldots ,&\\\nonumber 
&x^{-1}\cJ _1\cap\cK _s, x^{-1}\cJ _2\cap\cK _s ,\ldots
x^{-1}\cJ_r\cap
\cK_s)&\\\nonumber\end{eqnarray}with empty
sets removed.\label{sord-ered}
\end{lemma}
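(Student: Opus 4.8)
The plan is to translate everything into the language of subgraphs of the path $1\,\text{---}\,2\,\text{---}\cdots\text{---}\,n$, and to exploit that the two hypotheses $x\in X_J^{-1}$ and $x\in X_K$ say, respectively, that $x^{-1}$ is monotone on each block $\cJ_i$ and that $x$ is monotone on each block $\cK_j$. First I would record the combinatorial dictionary for $S_n$: $x\in X_K$ precisely when $l(xs_i)>l(x)$ for every $s_i\in K$, i.e. $x(i)<x(i+1)$ along every edge of $\cK$, equivalently $x$ is increasing on each block $\cK_j$; dually $x\in X_J^{-1}$ precisely when $x^{-1}$ is increasing on each block $\cJ_i$. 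I would also note the elementary facts that will be used repeatedly: every graph in sight ($\cJ$, $\cK$, $x^{-1}\cJ$, and $x^{-1}\cJ\cap\cK$) is a subgraph of the path on $N$, so each of its connected components is an interval of $N$; the components of $x^{-1}\cJ$ have vertex sets $x^{-1}\cJ_1,\ldots,x^{-1}\cJ_r$; and by the remark following the definition of the ordered presentation, $\cJ_1<\cdots<\cJ_r$ and $\cK_1<\cdots<\cK_s$ as sets of integers.

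Next I would identify the vertex set of an arbitrary connected component $C$ of $x^{-1}\cJ\cap\cK$. Being connected in $x^{-1}\cJ\cap\cK$, the set $C$ is connected both in $x^{-1}\cJ$ and in $\cK$, hence lies inside a single $x^{-1}\cJ_i$ and a single $\cK_j$; thus $C\subseteq x^{-1}\cJ_i\cap\cK_j$. The content of the lemma is the reverse inclusion: each nonempty $x^{-1}\cJ_i\cap\cK_j$ is itself connected in $x^{-1}\cJ\cap\cK$, which, since $C$ is a maximal connected set, forces $C=x^{-1}\cJ_i\cap\cK_j$.

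To prove that $x^{-1}\cJ_i\cap\cK_j=\{a\in\cK_j : x(a)\in\cJ_i\}$ is connected I would argue in two steps. (a) It is an interval of $N$: $\cJ_i$ is an interval, $x$ is increasing on the interval $\cK_j$ because $x\in X_K$, and the preimage of an interval under an increasing map, intersected with its interval domain, is an interval. (b) Every path-edge $(a,a+1)$ with $a,a+1$ in this interval is an edge of $x^{-1}\cJ$ (it is automatically an edge of $\cK$, since $\cK_j$ is a whole component): here $x(a),x(a+1)\in\cJ_i$ and $x(a)<x(a+1)$, and were these two values not consecutive there would be an integer $m$ with $x(a)<m<x(a+1)$, necessarily lying in the interval $\cJ_i$; applying $x^{-1}$, which is increasing on $\cJ_i$ because $x\in X_J^{-1}$, would give an integer $x^{-1}(m)$ strictly between $a$ and $a+1$, which is impossible. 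Hence $(x(a),x(a+1))$ is an edge of $\cJ$, so $(a,a+1)$ is an edge of $x^{-1}\cJ$, completing (b). Since the sets $x^{-1}\cJ_i\cap\cK_j$ are pairwise disjoint and cover $N$, we conclude that the connected components of $x^{-1}\cJ\cap\cK$ are exactly the nonempty ones among them. I expect step (b) to be the crux: it is the one place where the hypothesis $x\in X_J^{-1}$ is genuinely used (as (a) uses $x\in X_K$), and a small example with $x^{-1}$ non-monotone on some $\cJ_i$ shows that otherwise $x^{-1}\cJ_i\cap\cK_j$ can split into several components, so the lemma truly is a statement about minimal length representatives.

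Finally I would check the ordering. If $j<j'$ then every vertex of $\cK_j$ precedes every vertex of $\cK_{j'}$, so all components contained in $\cK_j$ come before all those contained in $\cK_{j'}$; this produces the outer ordering by $j$ in (\ref{j-andk}). Within a fixed $\cK_j$, if $i<i'$ then $\cJ_i$ lies entirely below $\cJ_{i'}$, so for $a\in x^{-1}\cJ_i\cap\cK_j$ and $a'\in x^{-1}\cJ_{i'}\cap\cK_j$ we have $x(a)<x(a')$, whence $a<a'$ because $x$ is increasing on $\cK_j$; thus $x^{-1}\cJ_i\cap\cK_j$ precedes $x^{-1}\cJ_{i'}\cap\cK_j$, giving the inner ordering by $i$. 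Listing the components in this order and deleting the empty sets yields precisely (\ref{j-andk}), proving the lemma.
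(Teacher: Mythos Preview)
Your argument is correct and follows essentially the same route as the paper: reduce both hypotheses to monotonicity statements ($x$ increasing on each $\cK_j$, $x^{-1}$ increasing on each $\cJ_i$), show that each nonempty $x^{-1}\cJ_i\cap\cK_j$ is a full connected component of $x^{-1}\cJ\cap\cK$, and verify the lexicographic ordering. Your step~(b) is exactly the paper's assertion~2.3, and your ordering check matches its cases~1.1 and~1.2; you package the ``interval'' observation~(a) a bit more explicitly than the paper, which instead derives it from the ordering together with the fact that the sets partition~$N$.

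One small slip to fix: it is \emph{not} true that $x^{-1}\cJ$ is a subgraph of the path on $N$; its edges are the pairs $(x^{-1}(i),x^{-1}(i+1))$ for $(i,i+1)\in\cJ$, which need not be consecutive integers. Fortunately you never actually use that claim---you only need that the components of $x^{-1}\cJ$ have vertex sets $x^{-1}\cJ_1,\ldots,x^{-1}\cJ_r$ (true because $x$ is a graph isomorphism) and that $x^{-1}\cJ\cap\cK$ is a subgraph of the path (true because $\cK$ is). Simply delete $x^{-1}\cJ$ from the parenthetical list and the proof stands.
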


\begin{proof}
To prove that
(\!~\ref{j-andk}) 
is the ordered presentation of 
$x^{-1}\cJ\cap\cK$, it is sufficient to prove the
following two statements

\begin{enumerate}
\item The elements of each set are less than those that appear in any set later in the list.
\item Each non-empty set    $x^{-1}\cJ _q\cap\cK _m$ is indeed the vertex set of a connected component
of $x^{-1}\cJ\cap\cK$.
\end{enumerate}
To prove statement $1$ we must show that
 
$1.1$ The elements in $x^{-1}\cJ _r\cap\cK _m$ are less than those in $x^{-1}\cJ _1\cap\cK _{m+1}$

$1.2$ The elements in $x^{-1}\cJ _q\cap\cK _m$ are less than those in $x^{-1}\cJ _{q+1}\cap\cK
_{m}$.
 
Case $1.1$ follows immediately, since all vertices in  $\cK _m$ are less than
those in $\cK _{m+1}$ by definition.

To prove case $1.2$ we need only show that if the vertex  $i\in
x^{-1}\cJ_q\cap \cK_m$ and $j\in x^{-1}\cJ_{q+1}\cap \cK_m$, then $i<j$. To do this we shall first prove that 
all  vertices in $\cK _m$ appear from left to right in increasing order in  the list
$x^{-1}(1),x^{-1}(2),\ldots ,x^{-1}(n)$. 
 
From the definition of $X_K$ as a set of minimal length coset representatives, it follows that if 
$x\in X_K$ then $l(xk)>l(x)$ for all $k\in K$.  In $S_n$, $l(x)$ \index{length of a word!symmetric
groups}is the number of inversions in $x$, that is the number of $h<l$ for which
$x(l)<x(h)$, \cite{humphrey-coxeter}. Hence it follows
that for all
$k=(h,h+1)\in K$ we have
$x(h)<x(h+1)$, since $xk$, $x $ differ only in the reversing of $h$ and
$h+1$. From this we can deduce that  $h$ is to the left of $h+1$  in the list 
 \[x^{-1}(1),x^{-1}(2),\ldots ,x^{-1}(n).\]
  Now suppose that $i\in  x^{-1}\cJ _q\cap \cK _m$,    $j\in x^{-1}\cJ_{q+1}\cap \cK_m$. Then  
$x(i)=u\in \cJ_q$, and
$ x(j)=v\in \cJ_{q+1}$. It follows that    $u<v$, and so $x^{-1}(u)$ appears before $x^{-1}(v)$
in 
$x^{-1}(1),x^{-1}(2),\ldots ,x^{-1}(n)$. However, $x^{-1}(u)=i$ and $x^{-1}(v)=j$, and  since we know that the vertices of
$\cK _m$ appear in increasing order from left to right in  $x^{-1}(1),x^{-1}(2),\ldots ,x^{-1}(n)$, 
it follows that
$i<j$.

Statement $2$ will follow if we can prove the following assertions.
 
$2.1$ The sets $x^{-1}\cJ _q\cap\cK _m$ are all disjoint.

$2.2$ No edge in $x^{-1}\cJ\cap \cK$ connects vertices in different subsets  $x^{-1}\cJ _q\cap\cK
_m$ and $x^{-1}\cJ _{q'}\cap\cK _{m'}$. 

$2.3$ For every $i,i+1\in x{\cal J}_q\cap{\cal K}_m$,  an
edge exists   in
$x^{-1}\cJ\cap \cK$ between $i$, $i+1$.

Again,   assertion $2.1$ follows since all $\cJ _q$ and $\cK _m$ are disjoint and $x$ is a 
bijection from $N$ to itself.

To prove   assertion $2.2$, let $(u,v)$ be an edge in $x^{-1}\cJ\cap \cK$, such that
 $u\in x^{-1}\cJ _q\cap\cK _m$ and $v\in x^{-1}\cJ _{q'}\cap\cK _{m'}$. We know that $\cJ _q$ and $\cJ _{q'}$
are vertex sets of connected components of $\cJ$, so $x^{-1}\cJ _q$ and $x^{-1}\cJ _{q'}$ must be vertex sets
of connected components of $x^{-1}\cJ$. Hence, $q=q'$. Similarly, $\cK _m$ and $\cK _{m'}$ are vertex
sets of connected components of $\cK$, and so $m=m'$.

For   assertion $2.3$, let  $i,i+1\in x^{-1}\cJ _q\cap\cK _m$ and let $ x(i)=u$, and $
x(i+1)=u+l$. Since we know from the proof of case $1.2$ that all $i\in {\cal K}_m$
appear in increasing order from left to right in the list $x^{-1}(1),x^{-1}(2),\ldots ,x^{-1}(n)$,
we can deduce that  $l\geq 1$. We can also deduce that because $
X_J^{-1}$ is defined as a set of minimal length right coset representatives, we have that
$x^{-1}(v)<x^{-1}(v+1)$ for all $(v,v+1)\in J$.

Therefore, since $u, u+l\in {\cal J}_q$, we have that $u+k\in {\cal J}_q$  for all $k=0,\ldots ,l$
such that
\[x^{-1}(u)<x^{-1}(u+1)<\ldots <x^{-1}(u+l-1)<x^{-1}(u+l).\]
However,   $x^{-1}(u)=i$, $x^{-1}(u+l)=i+1$, so it follows that $l=1$, and  so, by definition  
$(u,u+l)$ is an edge in  $\cJ$. Therefore, since $x^{-1}(u)=i$ and $x^{-1}(u+l)=i+1$, it follows
that $(i,i+1)$ is an edge in 
$x^{-1}\cJ\cap\cK$, and we are done. \end{proof}

As a consequence of  Lemma 2 \cite{solomon-mackey}, and our
Lemma ~\ref{sord-ered} \begin{eqnarray*} x^{-1}W_Jx\cap
W_K&=&W_{x^{-1}Jx\cap K}\\ &=&\cW _{x^{-1}\cJ\cap\cK}\\
&=&\cW_{x^{-1}\cJ_1\cap \cK_1}\times\ldots\times \cW_{x^{-1}\cJ_r\cap \cK_s}\\ 
&=&(x^{-1}\cW_{\cJ_1}x\cap \cW_{\cK_1})\times\ldots\times (x^{-1}\cW_{\cJ_r}x\cap
\cW_{\cK_s})\\
&\cong&(x^{-1}S_{\kappa _1}x\cap S_{\nu _1})\times\ldots\times
(x^{-1}S_{\kappa _r}x\cap S_{\nu _s})\\
\end{eqnarray*}
where $\kappa$ and $\nu$ are suitable compositions of $n$ determined by $J$, $K$ respectively, according to condition
(\!~\ref{scor-respond}).
Note that the final isomorphism symbol is an equality if $x^{-1}S_{\kappa _i}x\cap S_{\nu _j}$ is regarded as the group of
permutations on $x^{-1}\cJ_{i}\cap \cK_{j}.$

Let
\[z_{ij}=|x^{-1}\cJ_{i}\cap \cK_{j}|\]then, by Theorem 1.3.10 \cite{james-kerber}, 
we have a bijective mapping
\[\zeta :x\mapsto (z_{ij})\]
from $X_{J}^{-1}\cap X_K$ into the set of $s\times r$ matrices with non-negative  integer entries,
$\boldsymbol{z}=(z_{ij}) $, which satisfy \[\sum _i z_{ij}=\kappa _j,\ \sum _j z_{ij}=\nu _i.\]
Observe that reading the non-zero   entries of the matrix $\boldsymbol{z}$ by row give a 
composition, $\eta$,
of $n$. We say that $\eta$ is the\emph{ reading word} \index{reading word!symmetric groups} of 
$\boldsymbol{z}$, and note that $\boldsymbol{S_\eta}$ is isomorphic to $W_{x^{-1}Jx\cap K}$. We
also observe that each matrix corresponds to one $x\in X_J^{-1}\cap X_K$, given in Solomon's
Theorem. Therefore, if we now rename the basis elements such that ${\cal X}_J$
 becomes $B_\kappa$, where the components of $\kappa$ in order are the sizes of the vertex sets of
$\cJ$ taken in the natural order,  we can recast
 Solomon's Theorem in terms of compositions and matrices as follows.

 \begin{thm} For every composition $\nu$   of $n$, let $X_{\nu}$ be the unique set of
minimal length left coset representatives of $S_n/\boldsymbol{S_{\nu}}$. Let 
\[B_{\nu}=\sum_{\sigma\in X_{\nu}}\sigma.\]

If $\kappa, \nu$ are compositions of $n$, then
 \[B_{\kappa}B_{\nu}=\sum _{\boldsymbol{z}} B_{\eta}\]
where the sum is over all matrices $\boldsymbol{z}=(z_{ij})$ with non-negative integer entries that satisfy
\begin{enumerate}
\item $\sum _i z_{ij}=\kappa _j$,
\item $\sum _j z_{ij}=\nu _i$.\end{enumerate}
For each matrix, $\boldsymbol{z}$,  $\eta$ is the reading word of $\boldsymbol{z}$.

\end{thm}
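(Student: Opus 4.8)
The plan is to obtain this theorem purely as a translation of Solomon's Theorem~\ref{solomon-theorem} through the dictionary developed above, introducing no new computation of substance. The first step is to record the bijection, implicit in the discussion of ordered presentations, between subsets of $S$ and compositions of $n$: to a composition $\kappa=(\kappa_1,\ldots,\kappa_r)$ of $n$ one assigns the unique subset $J\subseteq S$ whose graph $\cJ$ has as connected components the consecutive intervals $\{1,\ldots,\kappa_1\},\{\kappa_1+1,\ldots,\kappa_1+\kappa_2\},\ldots$, so that condition~(\ref{scor-respond}) holds. Identifying $\boldsymbol{S_\kappa}$ with the parabolic subgroup $W_J=\cW_\cJ$ of $S_n$ realised on these intervals, we get $X_\kappa=X_J$ on the nose, hence $B_\kappa=\mathcal{X}_J$; likewise $B_\nu=\mathcal{X}_K$ for the $K$ associated to $\nu$.

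Next I would simply invoke Theorem~\ref{solomon-theorem} for this pair $J,K$, which gives
\[B_\kappa B_\nu=\mathcal{X}_J\mathcal{X}_K=\sum_{x\in X_J^{-1}\cap X_K}\mathcal{X}_{x^{-1}Jx\cap K},\]
and then rewrite both the index set and the summands. For the index set I would use the counting of minimal-length double coset representatives: the map $\zeta\colon x\mapsto\boldsymbol z$, where $\boldsymbol z$ is the $s\times r$ matrix with $(i,j)$ entry $|x^{-1}\cJ_j\cap\cK_i|$, is a bijection from $X_J^{-1}\cap X_K$ onto the set of non-negative integer matrices whose column sums are $\sum_i|x^{-1}\cJ_j\cap\cK_i|=|\cJ_j|=\kappa_j$ and whose row sums are $\sum_j|x^{-1}\cJ_j\cap\cK_i|=|\cK_i|=\nu_i$; this is Theorem~1.3.10 of \cite{james-kerber}. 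So the sum over $x$ becomes exactly the sum over matrices $\boldsymbol z$ satisfying conditions~1 and~2.

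For the summands I would use Lemma~\ref{sord-ered}. By that lemma the ordered presentation of $x^{-1}\cJ\cap\cK$ lists the non-empty sets $x^{-1}\cJ_j\cap\cK_i$ in the order: all blocks with $\cK$-index $1$, then all with $\cK$-index $2$, and so on, which is precisely the order in which the non-zero entries of $\boldsymbol z=\zeta(x)$ are read row by row. Hence the sequence of component sizes of $x^{-1}\cJ\cap\cK$ is the reading word $\eta$ of $\boldsymbol z$; combining this with Lemma~2 of \cite{solomon-mackey} (which gives $W_{x^{-1}Jx\cap K}=\cW_{x^{-1}\cJ\cap\cK}$), the subset $x^{-1}Jx\cap K$ of $S$ is the one associated to the composition $\eta$, so $\mathcal{X}_{x^{-1}Jx\cap K}=B_\eta$. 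Substituting into the displayed sum yields $B_\kappa B_\nu=\sum_{\boldsymbol z}B_\eta$.

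I do not expect a genuine obstacle here: the real content has already been absorbed into Lemma~\ref{sord-ered}, and what remains is bookkeeping. The only point that needs care is keeping the indexing conventions consistent — confirming that under $\zeta$ the summation over $x\in X_J^{-1}\cap X_K$ matches the summation over matrices with the stated margins (i.e.\ which composition controls rows and which controls columns), and that the left-to-right order of the ordered presentation in Lemma~\ref{sord-ered} coincides with the row-by-row reading order used to define $\eta$. Once those matchings are pinned down, the identity follows term by term.
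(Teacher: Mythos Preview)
Your proposal is correct and follows essentially the same route as the paper: apply Theorem~\ref{solomon-theorem}, convert the index set $X_J^{-1}\cap X_K$ to matrices via Theorem~1.3.10 of \cite{james-kerber}, and convert each summand $\mathcal{X}_{x^{-1}Jx\cap K}$ to $B_\eta$ via Lemma~\ref{sord-ered} together with Lemma~2 of \cite{solomon-mackey}. Your caveat about keeping the row/column conventions straight is well placed, and otherwise there is nothing to add.
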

This is precisely the classical matrix interpretation of Solomon's Theorem for the symmetric groups, for instance Proposition 1.1, \cite{garsia-reutenauer}.

\begin{ack}
The author is indebted to Michael Atkinson for  many useful discussions, and grateful to Nantel Bergeron for critical comments.
\end{ack}

 \end{document}